\newtheorem{lem}{Lemma}
\newtheorem{conj}{Conjecture}
\def\zt{\zeta}
\def\lfl{\lfloor}
\def\rfl{\rfloor}
\begin{document}
\title{On Louchard's Asymptotic Series}
\author{Michael E. Hoffman\\
\small Dept. of Mathematics, U. S. Naval Academy\\
\small Annapolis, MD 21402 USA\\
\small {\tt meh@usna.edu}}
\date{October 31, 2017}
\maketitle
\begin{abstract}
Recently G. Louchard obtained an asymptotic series $\sum_{j=0}^\infty
\frac{I_j}{n^j}$ for the integral $\int_0^1[x^n+(1-x)^n]^{\frac1{n}}dx$
as $n\to\infty$, and computed $I_j$ for $j\le 5$ in terms of 
values of the Riemann zeta function.  An interesting feature of the 
computation is that the $I_j$ are first obtained in terms of alternating 
multiple zeta values, but then everything except products of ordinary
zeta values cancels out.  We obtain similar formulas for $I_n$,
$6\le n\le 9$, and conjecture a general formula for $I_n$ in terms of 
alternating multiple zeta values.
We also conjecture that $I_n$ is a rational polynomial in the ordinary
zeta values.
\end{abstract}
\par\noindent
Louchard \cite{L} defines 
\begin{equation}
\label{idef}
I(n)=\int_0^1[x^n+(1-x)^n]^{\frac1{n}}dx
\end{equation}
and obtains an asymptotic series
\[
I(n)=I_0+\frac{I_1}{n}+\frac{I_2}{n^2}+\frac{I_3}{n^3}+\cdots
\]
as follows.
Because of the symmetry around $x=\frac12$ in (\ref{idef}), one can
write
\[
I(n)=2\int_0^{\frac12}[x^n+(1-x)^n]^{\frac1n}dx=2\int_0^{\frac12}(1-x)
\left[1+\left(\frac{x}{1-x}\right)^n\right]^{\frac1{n}}dx .
\]
Louchard makes the change of variable $x=\frac12-\frac{u}{2n}$ to get
\begin{multline*}
\frac1{2n}\int_0^{2n}\left(\frac12+\frac{u}{4n}\right)\left[
1+\left(\frac{1-\frac{u}{2n}}{1+\frac{u}{2n}}\right)^n\right]^{\frac1n}du\\
=\frac1{2n}\int_0^{2n}\left(\frac12+\frac{u}{4n}\right)\left[
1+\exp\left(\log\left(\frac{1-\frac{u}{2n}}{1+\frac{u}{2n}}
\right)^n\right)\right]^{\frac1n}du\\
=\frac1{2n}\int_0^{2n}\left(\frac12+\frac{u}{4n}\right)\left[
1+\exp\left(n\log\left(\frac{1-\frac{u}{2n}}{1+\frac{u}{2n}}\right)
\right)\right]^{\frac1n}du\\
=\frac1{2n}\int_0^{2n}\left(\frac12+\frac{u}{4n}\right)\left[1+
\exp\left(-u-\frac{u^3}{12n^2}-\frac{u^5}{80n^4}-\frac{u^7}{448n^6}
-\cdots\right)\right]^{\frac1n}du\\
=\frac1{2n}\int_0^{2n}\left(\frac12+\frac{u}{4n}\right)\left[1+
e^{-u}\exp\left(-\frac{u^3}{12n^2}-\frac{u^5}{80n^4}-\frac{u^7}{448n^6}
-\cdots\right)\right]^{\frac1n}du\\
=\frac1{2n}\int_0^{2n}\left(\frac12+\frac{u}{4n}\right)\bigg[1+e^{-u}
-\frac{e^{-u}u^3}{12n^2}-\frac{e^{-u}u^5}{80n^4}+\frac{e^{-u}u^6}{288n^4}
-\frac{e^{-u}u^7}{448n^6}+\frac{e^{-u}u^8}{960n^6}\\
-\frac{e^{-u}u^9}{10368n^6}+\cdots\bigg]^{\frac1n}du\\
=\frac1{2n}\int_0^{2n}\left(\frac12+\frac{u}{4n}\right)\exp\bigg[\frac1n\log
\bigg(1+e^{-u}-\frac{e^{-u}u^3}{12n^2}-\frac{e^{-u}u^5}{80n^4}
+\frac{e^{-u}u^6}{288n^4}-\frac{e^{-u}u^7}{448n^6}\\
+\frac{e^{-u}u^8}{960n^6}-\frac{e^{-u}u^9}{10368n^6}
-\frac{e^{-u}u^9}{2304n^8}+\frac{71e^{-u}u^{10}}{268800n^8}
-\frac{e^{-u}u^{11}}{23040n^8}+\frac{e^{-u}u^{12}}{497664n^8}+\cdots\bigg)\bigg]du\\
=\frac1{2n}\int_0^{2n}\left(\frac12+\frac{u}{4n}\right)\exp\bigg[\frac1n\log
(1+e^{-u})+\frac1n\log\bigg(1-\frac{e^{-u}u^3}{12n^2(1+e^{-u})}
-\frac{e^{-u}u^5}{80n^4(1+e^{-u})}\\
+\frac{e^{-u}u^6}{288n^4(1+e^{-u})}
-\frac{e^{-u}u^7}{448n^6(1+e^{-u})}
+\frac{e^{-u}u^8}{960n^6(1+e^{-u})}-\frac{e^{-u}u^9}{10368n^6(1+e^{-u})}\\
-\frac{e^{-u}u^9}{2304n^8(1+e^{-u})}+\frac{71e^{-u}u^{10}}{268800n^8(1+e^{-u})}
-\frac{e^{-u}u^{11}}{23040n^8(1+e^{-u})}+\frac{e^{-u}u^{12}}{497664n^8(1+e^{-u})}
+\cdots\bigg)\bigg]du\\
=\frac1{2n}\int_0^{2n}\left(\frac12+\frac{u}{4n}\right)\exp\bigg[
\frac{\log(1+e^{-u})}{n}-\frac{e^{-u}u^3}{12n^3(1+e^{-u})}
-\frac{e^{-u}u^5}{80n^5(1+e^{-u})}
+\frac{e^{-u}u^6}{288n^5(1+e^{-u})^2}\\
-\frac{e^{-u}u^7}{448n^7(1+e^{-u})}
+\frac{e^{-u}u^8}{960n^7(1+e^{-u})^2}
+\frac{e^{-u}(e^{-u}-1)u^9}{10368n^7(1+e^{-u})^3}
+O\left(\frac1{n^9}\right)\bigg]du .
\end{multline*}
Now expand the exponential in series:
\begin{multline*}
\frac1{2n}\int_0^{2n}\left(\frac12+\frac{u}{4n}\right)\bigg[1+
\frac{\log(1+e^{-u})}{n}-\frac{e^{-u}u^3}{12n^3(1+e^{-u})}
-\frac{e^{-u}u^5}{80n^5(1+e^{-u})}\\
+\frac{e^{-u}u^6}{288n^5(1+e^{-u})^2}
-\frac{e^{-u}u^7}{448n^7(1+e^{-u})}+\frac{e^{-u}u^8}{960n^7(1+e^{-u})^2}
-\frac{e^{-u}(1-e^{-u})u^9}{10368n^7(1+e^{-u})^3}\\
+\frac{\log(1+e^{-u})^2}{2n^2}+\frac{e^{-2u}u^6}{288n^6(1+e^{-u})^2}
-\frac{\log(1+e^{-u})e^{-u}u^3}{12n^4(1+e^{-u})}
-\frac{\log(1+e^{-u})e^{-u}u^5}{80n^6(1+e^{-u})}\\
+\frac{\log(1+e^{-u})e^{-u}u^6}{288n^6(1+e^{-u})^2}
-\frac{\log(1+e^{-u})e^{-u}u^7}{448n^8(1+e^{-u})}
+\frac{\log(1+e^{-u})e^{-u}u^8}{960n^8(1+e^{-u})^2}\\
+\frac{\log(1+e^{-u})e^{-u}(e^{-u}-1)u^9}{10368n^8(1+e^{-u})^3}
+\frac{e^{-2u}u^8}{960n^8(1+e^{-u})^2}
-\frac{e^{-2u}u^9}{3456n^8(1+e^{-u})^3}
+\frac{\log(1+e^{-u})^3}{6n^3}\\
-\frac{\log(1+e^{-u})^2e^{-u}u^3}{24n^5(1+e^{-u})}
-\frac{\log(1+e^{-u})^2e^{-u}u^5}{160n^7(1+e^{-u})}
+\frac{\log(1+e^{-u})^2e^{-u}u^6}{576n^7(1+e^{-u})^2}
+\frac{\log(1+e^{-u})e^{-2u}u^6}{288n^7(1+e^{-u})^2}\\
+\frac{\log(1+e^{-u})^4}{24n^4}
-\frac{\log(1+e^{-u})^3e^{-u}u^3}{72n^6(1+e^{-u})}
-\frac{\log(1+e^{-u})^3e^{-u}u^5}{480n^8(1+e^{-u})}
+\frac{\log(1+e^{-u})^3e^{-u}u^6}{1728n^8(1+e^{-u})^2}\\
+\frac{\log(1+e^{-u})^2e^{-2u}u^6}{576n^8(1+e^{-u})^2}
+\frac{\log(1+e^{-u})^5}{120n^5}
-\frac{\log(1+e^{-u})^4e^{-u}u^3}{288n^7(1+e^{-u})}
+\frac{\log(1+e^{-u})^6}{720n^6}\\
-\frac{\log(1+e^{-u})^5e^{-u}u^3}{1440n^8(1+e^{-u})}
+\frac{\log(1+e^{-u})^7}{5040n^7}
+\frac{\log(1+e^{-u})^8}{40320n^8}+O\left(\frac1{n^9}\right)\bigg]du .
\end{multline*}
Separate the first term as
\[
\frac1{2n}\int_0^{2n}\left(\frac12+\frac{u}{4n}\right)du
=\frac1{2n}\left(n+\frac{4n^2}{8n}\right)=\frac34,
\]
and write the remainder in powers of $\frac1{n}$ as
\begin{multline*}
\int_0^{2n}\bigg[\frac1{4n^2}\log(1+e^{-u})+\frac1{8n^3}
\left(u\log(1+e^{-u})+\log(1+e^{-u})^2\right)+\\
\frac1{48n^4}\left(-\frac{u^3e^{-u}}{1+e^{-u}}+3u\log(1+e^{-u})^2
+2\log(1+e^{-u})^3\right)+\\
\frac1{96n^5}\left(-\frac{u^4e^{-u}}{1+e^{-u}}+2u\log(1+e^{-u})^3
-\frac{2u^3e^{-u}\log(1+e^{-u})}{1+e^{-u}}+\log(1+e^{-u})^4\right)\\
+\frac1{5760n^6}\bigg(\frac{5u^6e^{-u}}{(1+e^{-u})^2}
-\frac{18u^5e^{-u}}{1+e^{-u}}-\frac{60u^4e^{-u}\log(1+e^{-u})}{1+e^{-u}}
-\frac{60u^3e^{-u}\log(1+e^{-u})^2}{1+e^{-u}}\\
+30u\log(1+e^{-u})^4+12\log(1+e^{-u})^5\bigg)
+\frac1{11520n^7}\bigg(\frac{10u^6e^{-2u}}{(1+e^{-u})^2}
+\frac{5u^7e^{-u}}{(1+e^{-u})^2}\\
+\frac{10u^6e^{-u}\log(1+e^{-u})}{(1+e^{-u})^2}
-\frac{36u^5e^{-u}\log(1+e^{-u})}{1+e^{-u}}
-\frac{40u^3e^{-u}\log(1+e^{-u})^3}{1+e^{-u}}
-\frac{18u^6e^{-u}}{1+e^{-u}}\\
-\frac{60u^4e^{-u}\log(1+e^{-u})^2}{1+e^{-u}}
+12u\log(1+e^{-u})^5+4\log(1+e^{-u})^6\bigg)\\
+\frac1{1451520n^8}\bigg(-\frac{35u^9e^{-u}(1-e^{-u})}{(1+e^{-u})^3}
+\frac{378u^8e^{-u}}{(1+e^{-u})^2}
+\frac{630u^7e^{-u}\log(1+e^{-u})}{(1+e^{-u})^2}\\
+\frac{630u^6e^{-u}\log(1+e^{-u})^2}{(1+e^{-u})^2}
+\frac{630u^7e^{-2u}}{(1+e^{-u})^2}
+\frac{1260u^6e^{-2u}\log(1+e^{-u})}{(1+e^{-u})^2}
-\frac{810u^7e^{-u}}{1+e^{-u}}\\
-\frac{2268u^6e^{-u}\log(1+e^{-u})}{1+e^{-u}}
-\frac{2268u^5e^{-u}\log(1+e^{-u})^2}{1+e^{-u}}
-\frac{2520u^4e^{-u}\log(1+e^{-u})^3}{1+e^{-u}}\\
-\frac{1260u^3e^{-u}\log(1+e^{-u})^4}{1+e^{-u}}
+252u\log(1+e^{-u})^6+72\log(1+e^{-u})^7\bigg)\\
+\frac1{2903040n^9}\bigg(-\frac{35u^{10}e^{-u}(1-e^{-u})}{(1+e^{-u})^3}
-\frac{70u^9e^{-u}(1-e^{-u})\log(1+e^{-u})}{(1+e^{-u})^3}
-\frac{210u^9e^{-2u}}{(1+e^{-u})^3}\\
+\frac{756u^8e^{-2u}}{(1+e^{-u})^2}
+\frac{1260u^7e^{-2u}\log(1+e^{-u})}{(1+e^{-u})^2}
+\frac{1260u^6e^{-2u}\log(1+e^{-u})^2}{(1+e^{-u})^2}
+\frac{378u^9e^{-u}}{(1+e^{-u})^2}\\
+\frac{756u^8e^{-u}\log(1+e^{-u})}{(1+e^{-u})^2}
+\frac{630u^7e^{-u}\log(1+e^{-u})^2}{(1+e^{-u})^2}
+\frac{420u^6e^{-u}\log(1+e^{-u})^3}{(1+e^{-u})^2}\\
-\frac{810u^8e^{-u}}{1+e^{-u}}-\frac{1620u^7e^{-u}\log(1+e^{-u})}{1+e^{-u}}
-\frac{2268u^6e^{-u}\log(1+e^{-u})^2}{1+e^{-u}}\\
-\frac{1512u^5e^{-u}\log(1+e^{-u})^3}{1+e^{-u}}
-\frac{1260u^4e^{-u}\log(1+e^{-u})^4}{1+e^{-u}}
-\frac{504u^3e^{-u}\log(1+e^{-u})^5}{1+e^{-u}}\\
+72u\log(1+e^{-u})^7+18\log(1+e^{-u})^8\bigg)
+O\left(\frac1{n^{10}}\right)\bigg]du .
\end{multline*}
Hence $I_0=\frac34$ and $I_1=0$.  Now let the bounds of integration
go from 0 to $\infty$ in the integral.
\par
We digress for a moment to establish notation for multiple zeta values
and alternating multiple zeta values.  The multiple zeta values are
defined by
\[
\zt(i_1,\dots,i_k)=\sum_{n_1>\cdots>n_k\ge 1}\frac1{n_1^{i_1}\cdots n_k^{i_k}}
\]
for positive integers $i_1,\dots,i_k$ with $i_1>1$.
This notation can be extended to alternating or ``colored'' multiple
zeta values by putting a bar over those exponents with an associated
sign in the numerator, as in
\[
\zt(\bar 3,\bar1,1)=\sum_{n_1>n_2>n_3\ge 1}\frac{(-1)^{n_1+n_2}}{n_1^3n_2n_3} .
\]
Note that $\zt(a_1,a_2,\dots,a_k)$ converges unless $a_1$ is an 
unbarred 1.  We have $\zt(\bar1)=-\log 2$ and
\[
\zt(\bar n)=(2^{1-n}-1)\zt(n)
\]
for $n\ge 2$.  Alternating multiple zeta values have been extensively
studied, and some identities for them are established in \cite{BBB}.
A remarkable result conjectured in \cite{BBB} but only proved thirteen
years later in \cite{Z} is
\begin{equation}
\label{zhid}
\zt(\{\bar2,1\}_n)=\frac1{8^n}\zt(\{3\}_n) ,
\end{equation}
where $\{a\}_n$ means $n$ repetitions of $a$.
\par 
Now we prove some lemmas expressing improper integrals in terms of alternating
multiple zeta values. 
\begin{lem}
\label{one}
\begin{enumerate}
\item
For integers $p\ge0$, $q\ge 1$,
\[
\int_0^\infty u^p\log(1+e^{-u})^qdu
=p!q!(-1)^q\zt(\overline{p+2},\{1\}_{q-1}).
\]
\item
For integers $p,q\ge 0$,
\[
\int_0^\infty u^p\log(1+e^{-u})^q\frac{e^{-u}}{1+e^{-u}}du
=p!q!(-1)^{q-1}\zt(\overline{p+1},\{1\}_q) .
\]
\end{enumerate}
\end{lem}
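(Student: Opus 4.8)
The plan is to reduce both integrals to weighted sums over compositions by expanding the logarithm, and then to convert those composition sums into ordered (multiple zeta) sums by a single combinatorial identity. First I would use $\log(1+e^{-u})=\sum_{k\ge1}\frac{(-1)^{k-1}}{k}e^{-ku}$ (valid for $u>0$), raise it to the $q$th power, and integrate termwise against $u^p$ by means of $\int_0^\infty u^pe^{-Nu}\,du=\frac{p!}{N^{p+1}}$. This gives
\[
\int_0^\infty u^p\log(1+e^{-u})^q\,du=p!\,(-1)^q\sum_{N\ge q}\frac{(-1)^N}{N^{p+1}}A_q(N),\qquad A_q(N)=\sum_{\substack{k_1+\cdots+k_q=N\\k_i\ge1}}\frac1{k_1\cdots k_q},
\]
the sign coming from $(-1)^{(k_1+\cdots+k_q)-q}=(-1)^N(-1)^q$. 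The interchange of summation and integration is justified by absolute convergence of the positive-term double series (bound $N\ge\max_ik_i$ to see $\sum_{\vec k}\frac{1}{(\prod k_i)N^{p+1}}<\infty$), so Tonelli applies.

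The crux is the identity
\[
A_q(N)=\frac{q!}{N}\,e_{q-1}(1,\tfrac12,\dots,\tfrac1{N-1}),
\]
where $e_{q-1}$ is the elementary symmetric polynomial. I would derive it from the generating function $\sum_N A_q(N)z^N=(-\log(1-z))^q$, combined with the classical expansion of $(-\log(1-z))^q$ via unsigned Stirling numbers of the first kind and the factorization $x(x+1)\cdots(x+N-1)=\sum_q|s(N,q)|x^q$, which yields $|s(N,q)|=(N-1)!\,e_{q-1}(1,\dots,\tfrac1{N-1})$. This is the step that manufactures the factor $q!$ and turns the \emph{unordered} composition sum into an \emph{ordered} one, and I expect it to be the main obstacle, both to organize cleanly and to present without heavy Stirling-number bookkeeping. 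Once it is in hand, I set $n_1=N$ and write $e_{q-1}(1,\dots,\tfrac1{N-1})=\sum_{N>n_2>\cdots>n_q\ge1}\frac1{n_2\cdots n_q}$, so the right-hand side collapses to $p!\,q!\,(-1)^q\zt(\overline{p+2},\{1\}_{q-1})$, which is part (1). The only delicate point beyond the identity itself is keeping the sign straight.

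For part (2) I would run the identical argument after inserting the extra factor $\frac{e^{-u}}{1+e^{-u}}=\sum_{k_0\ge1}(-1)^{k_0-1}e^{-k_0u}$, which introduces a new index $k_0$ carrying a sign but no reciprocal weight. Collecting terms of fixed total $M=k_0+k_1+\cdots+k_q$ replaces $A_q(N)$ by $\sum_{j=q}^{M-1}A_q(j)$, and the identity above together with the standard recursion $e_q(x_1,\dots,x_{M-1})=\sum_j x_j\,e_{q-1}(x_1,\dots,x_{j-1})$ (obtained by conditioning on the largest chosen index, with $x_j=\tfrac1j$) reduces this to $q!\,e_q(1,\dots,\tfrac1{M-1})$. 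Setting $m_1=M$ then gives $p!\,q!\,(-1)^{q-1}\zt(\overline{p+1},\{1\}_q)$ uniformly for all $p,q\ge0$. As a check (and an alternative for $p\ge1$), part (2) also follows from part (1) by a one-line integration by parts using $\frac{e^{-u}}{1+e^{-u}}\log(1+e^{-u})^q=-\frac1{q+1}\frac{d}{du}\log(1+e^{-u})^{q+1}$, which reduces it to $\frac{p}{q+1}$ times the part-(1) integral with parameters $(p-1,q+1)$; the boundary term vanishes for $p\ge1$, and $p=0$ is a direct telescoping evaluation.
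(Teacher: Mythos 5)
Your proof is correct, but the mechanism you use for the central combinatorial step is genuinely different from the paper's. Both arguments expand $\log(1+e^{-u})^q$ as a $q$-fold product of $\sum_{k\ge1}(-1)^{k-1}e^{-ku}/k$ and integrate termwise; the real content is converting the resulting \emph{symmetric} sum over $(k_1,\dots,k_q)$ into an \emph{ordered} sum while producing the factor $q!$. The paper does this with the partial-fraction symmetrization identity of \cite[Lemma 4.3]{H}, which writes $1/(k_1\cdots k_q)$ as a sum over permutations $\sigma$ of $1/(k_{\sigma(1)}(k_{\sigma(1)}+k_{\sigma(2)})\cdots(k_{\sigma(1)}+\dots+k_{\sigma(q)}))$; since the remaining factor depends only on $k_1+\dots+k_q$, each permutation contributes equally and the partial sums become the strictly decreasing indices of $\zt(\overline{p+2},\{1\}_{q-1})$. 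You instead group by the total $N$ and evaluate the composition sum $A_q(N)$ in closed form via Stirling numbers of the first kind, $A_q(N)=\frac{q!}{N}e_{q-1}(1,\tfrac12,\dots,\tfrac1{N-1})$, which is a correct classical identity and yields the same ordered sum. The paper's route is lighter for part (2): the extra index $m$ from $e^{-u}/(1+e^{-u})$ simply rides along as an outermost summand in the same identity, whereas you need the additional recursion for $e_q$ to absorb $k_0$. What your route buys is self-containedness (no appeal to \cite{H}) and an explicit closed form for $A_q(N)$ that makes the factor $q!$ transparent; your concluding integration-by-parts cross-check is also sound and mirrors the manipulations the paper uses later in Lemmas \ref{two} and \ref{three}. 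One small caveat: the Tonelli justification does not cover part (2) with $p=0$, where the absolutized series behaves like $\sum_M(\log M)^q/M$ and diverges, so termwise integration there needs Abel summation or a cutoff argument; the paper passes over this silently and in fact only invokes part (2) with $p\ge3$.
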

\begin{proof}
For the first part, note that
\begin{multline*}
\int_0^\infty u^p\log(1+e^{-u})^qdu=\int_0^\infty\sum_{n_1,\dots,n_q\ge 1}
\frac{(-1)^{n_1+\dots+n_q+q}u^pe^{-(n_1+\dots+n_q)u}}{n_1n_2\cdots n_q}du\\
=\sum_{n_1,\dots,n_q\ge 1}\frac{(-1)^{n_1+\dots+n_q+q}p!}
{n_1\cdots n_q(n_1+\dots+n_q)^{p+1}}\\
=\sum_{n_1,\dots,n_q\ge 1}\frac{(-1)^{n_1+\dots+n_q+q}p!q!}{n_1(n_1+n_2)\cdots 
(n_1+\dots+n_{q-1})(n_1+\dots+n_q)^{p+2}}\\
=p!q!(-1)^q\zt(\overline{p+2},\{1\}_{q-1}),
\end{multline*}
where we used \cite[Lemma 4.3]{H} in the pentultimate step.
For the second part,
\begin{multline*}
\int_0^\infty u^p\log(1+e^{-u})^q\frac{e^{-u}}{1+e^{-u}}du=\\
\int_0^\infty\sum_{n_1,\dots,n_q\ge 1}
\frac{(-1)^{n_1+\dots+n_q+q}u^pe^{-(n_1+\dots+n_q)u}}{n_1n_2\cdots n_q}
\sum_{m=1}^\infty (-1)^{m-1}e^{-mu}du\\
=\int_0^\infty\sum_{n_1,\dots,n_q,m\ge 1}
\frac{(-1)^{n_1+\dots+n_q+q+m-1}u^pe^{-(n_1+\dots+n_q+m)u}}{n_1n_2\cdots n_q}du\\
=\sum_{n_1,\dots,n_q,m\ge 1}
\frac{(-1)^{n_1+\dots+n_q+q+m-1}p!}{n_1n_2\cdots n_q(n_1+\dots+n_q+m)^{p+1}}du\\
=\sum_{n_1,\dots,n_q,m\ge 1}
\frac{(-1)^{n_1+\dots+n_q+m+q-1}p!q!}{n_1(n_1+n_2)\cdots 
(n_1+\dots+n_q)(n_1+\dots+n_q+m)^{p+1}}du\\
=p!q!(-1)^{q-1}\zt(\overline{p+1},\{1\}_q) .
\end{multline*}
\end{proof}
We note that the difference of the two parts of Lemma \ref{one} is
\begin{equation}
\label{diff}
\int_0^\infty \frac{u^p\log(1+e^{-u})^q}{1+e^{-u}}du=
p!q!(-1)^q[\zt(\overline{p+2},\{1\}_{q-1})+\zt(\overline{p+1},\{1\}_q)],
\end{equation}
which will be useful in proving the next lemma.
\begin{lem}
\label{two}
\begin{enumerate}
\item
For positive integers $p$,
\[
\int_0^\infty\frac{u^pe^{-u}}{(1+e^{-u})^2}du=-p!\zt(\bar p)
\]
\item
For positive integers $p$ and $q$,
\begin{multline*}
\int_0^\infty \frac{u^pe^{-u}\log(1+e^{-u})^q}{(1+e^{-u})^2}du=\\
p!q!\left[\sum_{k=1}^q(-1)^{k-1}(\zt(\bar p,\{1\}_k)
+\zt(\overline{p+1},\{1\}_{k-1}))-\zt(\bar p)\right] .
\end{multline*}
\end{enumerate}
\end{lem}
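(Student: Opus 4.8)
The plan is to treat the two parts separately, deriving part 1 by a direct series expansion of the kernel and part 2 by an integration by parts that reduces it to part 1 together with formula (\ref{diff}).

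For part 1, I would expand the kernel as a binomial series. Since $\frac{1}{(1+x)^2}=\sum_{m\ge1}(-1)^{m-1}mx^{m-1}$, setting $x=e^{-u}$ gives $\frac{e^{-u}}{(1+e^{-u})^2}=\sum_{m\ge1}(-1)^{m-1}me^{-mu}$. Integrating term by term against $u^p$ and using $\int_0^\infty u^pe^{-mu}\,du=p!/m^{p+1}$ yields
\[
\int_0^\infty\frac{u^pe^{-u}}{(1+e^{-u})^2}\,du=p!\sum_{m\ge1}\frac{(-1)^{m-1}}{m^p}=-p!\zt(\bar p),
\]
since $\zt(\bar p)=\sum_{m\ge1}(-1)^m/m^p$. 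The interchange of sum and integral needs a brief justification (absolute convergence for $p\ge2$, and a grouping argument for $p=1$), but this is routine.

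For part 2, write $J(p,q)$ for the integral to be evaluated. The key observation is that $\frac{e^{-u}}{(1+e^{-u})^2}\,du=d\!\left(\frac{1}{1+e^{-u}}\right)$, so I would integrate by parts with $v=\frac{1}{1+e^{-u}}$ and $w=u^p\log(1+e^{-u})^q$. The boundary terms vanish: at $u=0$ because $p\ge1$ forces $w\to0$, and at $u=\infty$ because $\log(1+e^{-u})^q\sim e^{-qu}$ kills the polynomial factor. Differentiating $w$ produces two pieces; recombining the second via $\frac{e^{-u}}{1+e^{-u}}$ and applying (\ref{diff}) with $p$ replaced by $p-1$ to the integral $\int_0^\infty u^{p-1}\log(1+e^{-u})^q/(1+e^{-u})\,du$, I expect to reach the recursion
\[
J(p,q)=-p!q!(-1)^q[\zt(\overline{p+1},\{1\}_{q-1})+\zt(\bar p,\{1\}_q)]+qJ(p,q-1),
\]
with base case $J(p,0)=-p!\zt(\bar p)$ furnished by part 1.

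To finish, setting $B_q=J(p,q)/(p!q!)$ turns the recursion into $B_q-B_{q-1}=(-1)^{q-1}[\zt(\overline{p+1},\{1\}_{q-1})+\zt(\bar p,\{1\}_q)]$ with $B_0=-\zt(\bar p)$, and telescoping from $q=0$ gives
\[
B_q=-\zt(\bar p)+\sum_{k=1}^q(-1)^{k-1}[\zt(\bar p,\{1\}_k)+\zt(\overline{p+1},\{1\}_{k-1})];
\]
multiplying back by $p!q!$ yields the claimed identity. The main obstacle is bookkeeping rather than analysis: one must set up the integration by parts so that the residual integral matches exactly the $(p-1,q)$ instance of (\ref{diff}), and keep the factorial and sign factors straight through the telescoping (in particular the collapse $-p\cdot(p-1)!=-p!$). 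The analytic content — vanishing boundary terms and term-by-term integration — is comparatively mild.
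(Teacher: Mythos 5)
Your proposal is correct and follows essentially the same route as the paper: part 1 by termwise integration of the expanded kernel, and part 2 by an integration by parts that produces a recursion in $q$ resolved via Eq.~(\ref{diff}) and part 1 as the base case. Your recursion $J(p,q)=qJ(p,q-1)+p!q!(-1)^{q-1}[\zeta(\overline{p+1},\{1\}_{q-1})+\zeta(\bar p,\{1\}_q)]$ is exactly the paper's recursion with $q+1$ relabeled as $q$, so the telescoping argument matches the paper's induction.
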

\begin{proof}
For the first part, we have
\begin{multline*}
\int_0^\infty\frac{u^pe^{-u}}{(1+e^{-u})^2}du=
\int_0^\infty\sum_{k=1}^\infty(-1)^{k+1}u^pke^{-ku}du=\\
\sum_{k=1}^\infty (-1)^{k+1}p!\frac{k}{k^{p+1}}=
-p!\sum_{k=1}^\infty (-1)^k\frac1{k^p}=-p!\zt(\bar p) .
\end{multline*}
For the second part, note that integration by parts gives
\begin{multline*}
\int_0^\infty \frac{u^pe^{-u}\log(1+e^{-u})^q}{(1+e^{-u})^2}du=
\int_0^\infty \frac{pu^{p-1}\log(1+e^{-u})^{q+1}}{1+e^{-u}}du\\
-\int_0^\infty \frac{u^pe^{-u}q\log(1+e^{-u})^q}{(1+e^{-u})^2}du
+\int_0^\infty \frac{u^pe^{-u}\log(1+e^{-u})^{q+1}}{(1+e^{-u})^2}du
\end{multline*}
and thus, using Eq. (\ref{diff}) with $p$ replaced by $p-1$ and $q$
replaced by $q+1$, we have
\begin{multline*}
\int_0^\infty \frac{u^pe^{-u}\log(1+e^{-u})^{q+1}}{(1+e^{-u})^2}du=
(q+1)\int_0^\infty\frac{u^pe^{-u}\log(1+e^{-u})^q}{(1+e^{-u})^2}du\\
+p!(q+1)!(-1)^q[\zt(\overline{p+1},\{1\}_q)+\zt(\bar p,\{1\}_{q+1})].
\end{multline*}
We then obtain the second part using induction on $q$ and the 
first part.
\end{proof}
\begin{lem}
\label{three}
\begin{enumerate}
\item
For positive integers $p$,
\[
\int_0^\infty\frac{u^pe^{-2u}}{(1+e^{-u})^2}du=p![\zt(\bar p)
-\zt(\overline{p+1})] .
\]
\item
For positive integers $p$ and $q$,
\[
\int_0^\infty \frac{u^pe^{-2u}\log(1+e^{-u})^q}{(1+e^{-u})^2}du=
p!q!\sum_{k=0}^q (-1)^k[\zt(\bar p,\{1\}_k)-\zt(\overline{p+1},\{1\}_k)] .
\]
\end{enumerate}
\end{lem}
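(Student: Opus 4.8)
The plan is to sidestep the integration-by-parts recursion used in Lemma \ref{two} and instead reduce both parts of the statement to integrals already evaluated in Lemmas \ref{one} and \ref{two}, by exploiting a single algebraic identity. The key observation is that
\[
\frac{e^{-2u}}{(1+e^{-u})^2}=\frac{e^{-u}}{1+e^{-u}}-\frac{e^{-u}}{(1+e^{-u})^2},
\]
which one verifies by placing the right-hand side over the common denominator $(1+e^{-u})^2$ and noting $e^{-u}(1+e^{-u})-e^{-u}=e^{-2u}$. Multiplying through by $u^p\log(1+e^{-u})^q$ and integrating over $(0,\infty)$ gives
\[
\int_0^\infty\frac{u^pe^{-2u}\log(1+e^{-u})^q}{(1+e^{-u})^2}\,du
=\int_0^\infty\frac{u^pe^{-u}\log(1+e^{-u})^q}{1+e^{-u}}\,du
-\int_0^\infty\frac{u^pe^{-u}\log(1+e^{-u})^q}{(1+e^{-u})^2}\,du,
\]
and each integral on the right is known in closed form.

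Next I would substitute those closed forms. The first integral is exactly the second part of Lemma \ref{one}, equal to $p!q!(-1)^{q-1}\zt(\overline{p+1},\{1\}_q)$, and the second is the second part of Lemma \ref{two}. After dividing by $p!q!$, the terms involving $\zt(\bar p,\cdots)$ come entirely from Lemma \ref{two}: the sum $-\sum_{k=1}^q(-1)^{k-1}\zt(\bar p,\{1\}_k)$ has its sign absorbed to become $\sum_{k=1}^q(-1)^{k}\zt(\bar p,\{1\}_k)$, and the isolated $+\zt(\bar p)$ supplies the $k=0$ term since $\zt(\bar p)=\zt(\bar p,\{1\}_0)$, so these reassemble into $\sum_{k=0}^q(-1)^k\zt(\bar p,\{1\}_k)$, matching the target.

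The only genuine bookkeeping — and what I expect to be the main (though modest) obstacle — is matching the $\zt(\overline{p+1},\cdots)$ terms. Here the boundary term $(-1)^{q-1}\zt(\overline{p+1},\{1\}_q)$ from Lemma \ref{one} must merge with $-\sum_{k=1}^q(-1)^{k-1}\zt(\overline{p+1},\{1\}_{k-1})$ from Lemma \ref{two}. Reindexing the latter by $j=k-1$ rewrites it as $-\sum_{j=0}^{q-1}(-1)^{j}\zt(\overline{p+1},\{1\}_{j})$, and since $(-1)^{q-1}=-(-1)^q$, the boundary term is precisely the missing $j=q$ summand, so the two combine to $-\sum_{k=0}^q(-1)^k\zt(\overline{p+1},\{1\}_k)$. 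Assembling the two reassembled pieces yields the formula in the second part.

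Finally, I would note that the first part is just the $q=0$ specialization: the same identity reduces $\int_0^\infty u^pe^{-2u}(1+e^{-u})^{-2}\,du$ to the $q=0$ case of Lemma \ref{one}, namely $-p!\zt(\overline{p+1})$, minus the first part of Lemma \ref{two}, namely $-p!\zt(\bar p)$, giving $p![\zt(\bar p)-\zt(\overline{p+1})]$, which is exactly what the second part's formula returns at $q=0$. Alternatively, the first part admits a direct derivation by expanding $(1+e^{-u})^{-2}=\sum_{k\ge1}(-1)^{k+1}ke^{-(k-1)u}$, multiplying by $e^{-2u}$, integrating term by term to get $p!\sum_{k\ge1}(-1)^{k+1}k(k+1)^{-(p+1)}$, and reindexing by $j=k+1$ together with $\tfrac{j-1}{j^{p+1}}=\tfrac1{j^p}-\tfrac1{j^{p+1}}$. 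Throughout, the interchange of summation and integration is justified exactly as in the proofs of Lemmas \ref{one} and \ref{two}.
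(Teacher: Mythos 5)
Your proof is correct, but it takes a genuinely different route from the paper's for the second part. The paper integrates by parts to obtain the recursion
\[
\int_0^\infty \frac{u^pe^{-2u}\log(1+e^{-u})^{q+1}}{(1+e^{-u})^2}du=
(q+1)\int_0^\infty \frac{u^pe^{-2u}\log(1+e^{-u})^q}{(1+e^{-u})^2}du
-p!(q+1)!(-1)^q[\zt(\bar p,\{1\}_{q+1})-\zt(\overline{p+1},\{1\}_{q+1})],
\]
evaluates the boundary integrals by Lemma \ref{one}, and then inducts on $q$ from the first part exactly as in the proof of Lemma \ref{two}; its proof of the first part is the direct series expansion you offer as an alternative. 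You instead use the partial-fraction identity $\frac{e^{-2u}}{(1+e^{-u})^2}=\frac{e^{-u}}{1+e^{-u}}-\frac{e^{-u}}{(1+e^{-u})^2}$ to reduce both parts in one stroke to the closed forms already recorded in Lemma \ref{one}(2) and Lemma \ref{two}, and your sign and reindexing bookkeeping checks out (the $\zt(\bar p)$ term supplies the $k=0$ summand, and the boundary term $(-1)^{q-1}\zt(\overline{p+1},\{1\}_q)$ completes the reindexed sum). What your approach buys is the elimination of a second induction and a second integration by parts; what it costs is a logical dependence on Lemma \ref{two}(2), which the paper's proof of Lemma \ref{three} avoids (it needs only Lemma \ref{one} and its own first part) --- harmless here, since Lemma \ref{two} is established first. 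Your deferral of the sum--integral interchange to the earlier lemmas is at the same level of rigor as the paper itself.
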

\begin{proof}
For the first part, we have
\begin{multline*}
\int_0^\infty\frac{u^pe^{-2u}}{(1+e^{-u})^2}du=
\int_0^\infty\sum_{k=1}^\infty(-1)^{k+1}u^pke^{-(k+1)u}du=
\sum_{k=1}^\infty (-1)^{k+1}\frac{p!k}{(k+1)^{p+1}}\\
=\sum_{k=0}^\infty (-1)^{k+1}\frac{p!}{(k+1)^p}
-\sum_{k=0}^\infty (-1)^{k+1}\frac{p!}{(k+1)^{p+1}}
=p![\zt(\bar p)-\zt(\overline{p+1})] .
\end{multline*}
For the second  part, note that integration by parts gives
\begin{multline*}
(1+q)\int_0^\infty \frac{u^pe^{-2u}\log(1+e^{-u})^q}{(1+e^{-u})^2}du=
p\int_0^\infty\frac{u^{p-1}e^{-u}\log(1+e^{-u})^{q+1}}{1+e^{-u}}du\\
-\int_0^\infty \frac{u^pe^{-u}\log(1+e^{-u})^{q+1}}{1+e^{-u}}du
+\int_0^\infty \frac{u^pe^{-2u}\log(1+e^{-u})^{q+1}}{(1+e^{-u})^2}du ,
\end{multline*}
or, using Lemma \ref{one},
\begin{multline*}
\int_0^\infty \frac{u^pe^{-2u}\log(1+e^{-u})^{q+1}}{(1+e^{-u})^2}du=
(q+1)\int_0^\infty \frac{u^pe^{-2u}\log(1+e^{-u})^q}{(1+e^{-u})^2}du\\
-p!(q+1)!(-1)^q(\zt(\bar p,\{1\}_{q+1})-\zt(\overline{p+1},\{1\}_{q+1})).
\end{multline*}
The second part now follows by induction on $q$, using the first
part as the base case.
\end{proof}
\begin{lem}
\label{four}
\begin{enumerate}
\item
For integers $p\ge2$,
\[
\int_0^\infty \frac{u^pe^{-2u}}{(1+e^{-u})^3}du=
\frac{p!}2[\zt(\overline{p-1})-\zt(\bar p)].
\]
\item
For integers $p\ge 2$,
\[
\int_0^\infty \frac{u^pe^{-u}(1-e^{-u})}{(1+e^{-u})^3}du=-p!\zt(\overline{p-1}) .
\]
\item
For positive integers $p$,
\[
\int_0^\infty \frac{u^pe^{-u}(1-e^{-u})\log(1+e^{-u})}{(1+e^{-u})^3}du=p!
\left[\zt(\overline{p-1},1)-\frac32\zt(\overline{p-1})+\frac32\zt(\bar p)
\right].
\]
\end{enumerate}
\end{lem}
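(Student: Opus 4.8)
The plan is to treat the three parts in order, using direct term-by-term integration for the first two and a single integration by parts for the third, relying throughout on $\int_0^\infty u^pe^{-mu}du=p!/m^{p+1}$ and the binomial expansion of $(1+e^{-u})^{-r}$. For part (1), write $x=e^{-u}$ and expand $\frac1{(1+x)^3}=\sum_{k\ge0}(-1)^k\binom{k+2}{2}x^k$. Multiplying by $e^{-2u}$ and reindexing by $m=k+2$ gives
\[
\frac{e^{-2u}}{(1+e^{-u})^3}=\sum_{m\ge2}(-1)^m\frac{m(m-1)}2e^{-mu}.
\]
Integrating against $u^p$ term by term yields $\frac{p!}2\sum_{m\ge2}(-1)^m\frac{m-1}{m^p}$; splitting $\frac{m-1}{m^p}=\frac1{m^{p-1}}-\frac1{m^p}$ and restoring the $m=1$ terms (which cancel against each other) produces $\frac{p!}2[\zt(\overline{p-1})-\zt(\bar p)]$, exactly as in the proof of Lemma~\ref{three}(1).

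Part (2) uses the same expansion but collapses more cleanly. Expanding $\frac{x-x^2}{(1+x)^3}$ with $x=e^{-u}$, the coefficient of $e^{-ju}$ works out to $(-1)^{j-1}\bigl(\binom{j+1}{2}+\binom{j}{2}\bigr)=(-1)^{j-1}j^2$, so
\[
\frac{e^{-u}(1-e^{-u})}{(1+e^{-u})^3}=\sum_{j\ge1}(-1)^{j-1}j^2e^{-ju}.
\]
Term-by-term integration then gives $p!\sum_{j\ge1}(-1)^{j-1}/j^{p-1}=-p!\zt(\overline{p-1})$ at once.

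The substance lies in part (3), and the observation that unlocks it is that the rational-exponential factor is an exact derivative,
\[
\frac{e^{-u}(1-e^{-u})}{(1+e^{-u})^3}=-\frac{d}{du}\frac{e^{-u}}{(1+e^{-u})^2}.
\]
I would integrate the target integral by parts using this antiderivative, taking $w=\frac{e^{-u}}{(1+e^{-u})^2}$ and $v=u^p\log(1+e^{-u})$. The boundary terms vanish (at $u=0$ because $u^p=0$, at $u=\infty$ by exponential decay), and since $v'=pu^{p-1}\log(1+e^{-u})-\frac{u^pe^{-u}}{1+e^{-u}}$, the integral reduces to
\[
p\int_0^\infty\frac{u^{p-1}e^{-u}\log(1+e^{-u})}{(1+e^{-u})^2}du-\int_0^\infty\frac{u^pe^{-2u}}{(1+e^{-u})^3}du.
\]
The first summand is Lemma~\ref{two}(2) with $q=1$ and $p$ replaced by $p-1$, equal to $p![\zt(\overline{p-1},1)+\zt(\bar p)-\zt(\overline{p-1})]$; the second is part (1) above, equal to $\frac{p!}2[\zt(\overline{p-1})-\zt(\bar p)]$. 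Subtracting and collecting the coefficients of $\zt(\overline{p-1})$ and $\zt(\bar p)$ gives the stated $p![\zt(\overline{p-1},1)-\frac32\zt(\overline{p-1})+\frac32\zt(\bar p)]$.

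I expect the only genuine obstacle to be spotting the exact-derivative identity used in part (3); once it is in hand the integration by parts is forced, the boundary terms are easily seen to vanish, and the evaluation becomes a bookkeeping exercise in the coefficients of $\zt(\overline{p-1})$ and $\zt(\bar p)$, with part (1) and Lemma~\ref{two}(2) supplying all of the analytic content.
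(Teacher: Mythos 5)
Your proposal is correct, and parts (1) and (2) are essentially the paper's own argument (the paper states the expansions $\frac{x^2}{(1+x)^3}=\frac12\sum_{k\ge1}(-1)^kk(k-1)x^k$ and $\frac{x-x^2}{(1+x)^3}=\sum_{k\ge1}(-1)^{k-1}k^2x^k$ and integrates term by term exactly as you do). Part (3), however, is handled differently in the paper: there the logarithm is also expanded as a series, the integral becomes the double sum $\sum_{k,l\ge1}(-1)^{k+l}\frac{k^2p!}{l(k+l)^{p+1}}$, and the partial-fraction identity $\frac{k^2}{l(l+k)^{p+1}}=\frac1{l(l+k)^{p-1}}-\frac2{(l+k)^p}+\frac{l}{(l+k)^{p+1}}$ reduces it to $\zt(\overline{p-1},1)$ plus single sums (the last piece being evaluated by symmetrizing in $k$ and $l$). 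Your route instead notices that $\frac{e^{-u}(1-e^{-u})}{(1+e^{-u})^3}=-\frac{d}{du}\frac{e^{-u}}{(1+e^{-u})^2}$ and integrates by parts once, reducing everything to Lemma \ref{two}(2) with $q=1$ and to part (1); I checked the bookkeeping and the coefficients $\pm\frac32$ come out right. What the paper's method buys is self-containment (only series expansions and the identification of double sums with alternating MZVs); what yours buys is the avoidance of the double-sum/partial-fraction manipulation, at the price of leaning on Lemma \ref{two}(2), which requires $p-1\ge1$ and hence $p\ge2$ — but that is no real loss, since for $p=1$ the stated right-hand side contains the divergent $\zt(\bar0)$ and the lemma is only ever applied with $p\ge9$. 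Both arguments are sound; yours is a legitimate and arguably slicker alternative for part (3).
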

\begin{proof}
For the first part, use the result
\[
\frac{x^2}{(1+x)^3}=\frac12\sum_{k\ge 1}(-1)^kk(k-1)x^k
\]
to expand the integral as
\begin{multline*}
\frac12\sum_{k\ge 1}(-1)^kk(k-1)\int_0^\infty u^pe^{-ku}du=
\frac12\sum_{k\ge 1}(-1)^kk(k-1)\frac{p!}{k^{p+1}}=\\
\frac{p!}2\left[\sum_{k\ge 1}\frac{(-1)^k}{k^{p-1}}-\sum_{k\ge 1}\frac{(-1)^k}{k^p}
\right],
\end{multline*}
and the conclusion follows.
For the second part, use the result
\begin{equation}
\label{sq}
\frac{x-x^2}{(1+x)^3}=\sum_{k\ge 1}(-1)^{k-1}k^2x^k
\end{equation}
similarly.
For the third part, use Eq. (\ref{sq}) and expand out the logarithm
to write the integral as
\[
\int_0^\infty\sum_{k,l\ge 1}(-1)^{k+l}\frac{k^2e^{-(k+l)u}u^p}{l}du=
\sum_{k,l\ge 1}(-1)^{k+l}\frac{k^2p!}{l(k+l)^{p+1}} .
\]
Now 
\[
\frac{k^2}{l(l+k)^{p+1}}=\frac1{l(l+k)^{p-1}}-\frac2{(l+k)^p}+\frac{l}{(l+k)^{p+1}},
\]
so
\begin{multline*}
\sum_{k,l\ge 1}(-1)^{k+l}\frac{k^2p!}{l(k+l)^{p+1}}=
\sum_{k,l\ge 1}\frac{(-1)^{k+l}}{l(l+k)^{p-1}}-
2\sum_{k,l\ge 1}\frac{(-1)^{k+l}}{(l+k)^p}+
\sum_{k,l\ge 1}\frac{(-1)^{k+l}l}{(l+k)^{p+1}}\\
=\zt(\overline{p-1},1)-2(\zt(\overline{p-1})-\zt(\bar p))+
\frac12(\zt(\overline{p-1})-\zt(\bar p)),
\end{multline*}
and the conclusion follows.
\end{proof}
Using Lemma \ref{one},
\[
I_2=\frac14\int_0^\infty\log(1+e^{-u})du=-\frac14\zt(\bar2)=\frac18\zt(2)
\]
and
\begin{multline*}
I_3=\frac18\int_0^\infty[u\log(1+e^{-u})+\log(1+e^{-u})^2]du=
\frac18[-\zt(\bar3)+2\zt(\bar2,1)]=\\
\frac18\left[\frac34\zt(3)+\frac14\zt(3)\right]=\frac18\zt(3) ,
\end{multline*}
where in simplifying $I_3$ we have used the case $n=1$ of Eq. (\ref{zhid}).
\par
In all further computations, expressions for alternating multiple zeta values
are simplified using the Multiple Zeta Value Data Mine \cite{BBV}; this source
gives formulas for the alternating multiple zeta values that are
generally much simpler than those Louchard uses.
We have
\begin{multline*}
I_4=\frac18\int_0^\infty\left(-\frac{u^3e^{-u}}{6(1+e^{-u})}+
\frac{u\log(1+e^{-u})^2}2+\frac{\log(1+e^{-u})^3}3\right)du=\\
\frac18[\zt(\bar4)+\zt(\bar3,1)-2\zt(\bar2,1,1)],
\end{multline*}
and since $\zt(\bar4)=-\frac78\zt(4)$, $\zt(\bar2,1,1)=-\frac1{16}\zt(4)
+\frac12\zt(\bar3,1)$, this implies $I_4=-\frac{3}{32}\zt(4)$.
Similarly, $I_5$ is
\begin{multline*}
\frac1{96}\int_0^\infty\left(-\frac{u^4e^{-u}}{1+e^{-u}}
-\frac{2u^3e^{-u}\log(1+e^{-u})}{1+e^{-u}}+2u\log(1+e^{-u})^3
+\log(1+e^{-u})^4\right)du\\
=\frac18[2\zt(\bar5)-\zt(\bar4,1)-\zt(\bar3,1,1)+2\zt(\bar2,1,1,1)] .
\end{multline*}
Now $\zt(\bar5)=-\frac{15}{16}\zt(5)$, and from \cite{BBV}
\begin{align*}
\zt(\bar4,1)&=-\frac{29}{32}\zt(5)+\frac12\zt(2)\zt(3)\\
\zt(\bar2,1,1,1)&=\frac{31}{64}\zt(5)-\frac14\zt(2)\zt(3)
+\frac12\zt(\bar3,1,1),
\end{align*}
giving the result $I_5=-\frac18\zt(2)\zt(3)$.  These results corroborate
Louchard's.
\par
Now we can go further.  Using Lemmas \ref{one} and \ref{two},
\begin{multline*}
I_6=\frac1{5760}\int_0^\infty\bigg(\frac{5u^6e^{-u}}{(1+e^{-u})^2}
-\frac{18u^5e^{-u}}{1+e^{-u}}
-\frac{60u^4e^{-u}\log(1+e^{-u})}{1+e^{-u}}\\
-\frac{60u^3e^{-u}\log(1+e^{-u})^2}{1+e^{-u}}
+30u\log(1+e^{-u})^4+12\log(1+e^{-u})^5\bigg)du\\
=\frac18[-2\zt(\bar6)-2\zt(\bar5,1)+\zt(\bar4,1,1)+
\zt(\bar3,1,1,1)-2\zt(\bar2,\{1\}_4)],
\end{multline*}
and since $\zt(\bar6)=-\frac{31}{32}\zt(6)$, and
\begin{align*}
\zt(\bar4,1,1)&=\frac5{16}\zt(6)-\frac14\zt(3)^2+\frac32\zt(\bar5,1)\\
\zt(\bar2,\{1\}_4)&=-\frac{11}{64}\zt(6)+\frac18\zt(3)^2-\frac14\zt(\bar5,1)
+\frac12\zt(\bar3,1,1,1)
\end{align*}
we obtain $I_6=\frac18\left[\frac{83}{32}\zt(6)-\frac12\zt(3)^2\right]$.
Similarly, using in addition Lemma \ref{three}, we have
\begin{multline*}
I_7=\frac1{11520}\int_0^\infty\bigg(\frac{10u^6e^{-2u}}{(1+e^{-u})^2}
+\frac{5u^7e^{-u}}{(1+e^{-u})^2}+\frac{10u^6e^{-u}\log(1+e^{-u})}{(1+e^{-u})^2}
-\frac{18u^6e^{-u}}{1+e^{-u}}\\
-\frac{36u^5e^{-u}\log(1+e^{-u})}{1+e^{-u}}
-\frac{60u^4e^{-u}\log(1+e^{-u})^2}{1+e^{-u}}
-\frac{40u^3e^{-u}\log(1+e^{-u})^3}{1+e^{-u}}\\
+12u\log(1+e^{-u})^5+4\log(1+e^{-u})^6\bigg)du\\
=\frac18\left[-\frac{17}2\zt(\bar7)+2\zt(\bar6,1)+2\zt(\bar5,1,1)-\zt(\bar4,1,1,1)
-\zt(\bar3,\{1\}_4)+2\zt(\bar2,\{1\}_5)\right].
\end{multline*}
From this and $\zt(\bar7)=-\frac{63}{64}\zt(7)$, together with
\begin{align*}
\zt(\bar6,1)&=-\frac{251}{128}\zt(7)+\frac78\zt(3)\zt(4)+\frac12\zt(2)\zt(5)\\
\zt(\bar4,1,1,1)&=\frac{315}{128}\zt(7)-\frac54\zt(3)\zt(4)
-\frac12\zt(2)\zt(5)+\frac32\zt(\bar5,1,1)\\
\zt(\bar2,\{1\}_5)&=-\frac{31}{128}\zt(7)+\frac3{16}\zt(3)\zt(4)
-\frac14\zt(\bar5,1,1)+\frac12\zt(\bar3,\{1\}_4),
\end{align*}
it follows that 
$I_7=\frac18\left[\frac32\zt(7)+\frac{27}8\zt(3)\zt(4)+\frac32\zt(2)\zt(5)
\right]$.
\par
For the next two $I_n$ we need all four lemmas.  First
\begin{multline*}
I_8=\frac1{1451520}\int_0^\infty\bigg(
-\frac{35u^9e^{-u}(1-e^{-u})}{(1+e^{-u})^3}
+\frac{378u^8e^{-u}}{(1+e^{-u})^2}
+\frac{630u^7e^{-u}\log(1+e^{-u})}{(1+e^{-u})^2}\\
+\frac{630u^6e^{-u}\log(1+e^{-u})^2}{(1+e^{-u})^2}
+\frac{630u^7e^{-2u}}{(1+e^{-u})^2}
+\frac{1260u^6e^{-2u}\log(1+e^{-u})}{(1+e^{-u})^2}
-\frac{810u^7e^{-u}}{1+e^{-u}}\\
-\frac{2268u^6e^{-u}\log(1+e^{-u})}{1+e^{-u}}
-\frac{2268u^5e^{-u}\log(1+e^{-u})^2}{1+e^{-u}}
-\frac{2520u^4e^{-u}\log(1+e^{-u})^3}{1+e^{-u}}\\
-\frac{1260u^3e^{-u}\log(1+e^{-u})^4}{1+e^{-u}}
+252u\log(1+e^{-u})^6+72\log(1+e^{-u})^7\bigg)du\\
=\frac18\bigg[\frac{17}2\zt(\bar 8)+\frac{17}2\zt(\bar7,1)-2\zt(\bar6,1,1)
-2\zt(\bar5,1,1,1)+\zt(\bar4,\{1\}_4)\\
+\zt(\bar3,\{1\}_5)-2\zt(\bar2,\{1\}_6)\bigg],
\end{multline*}
from which, using $\zt(\bar8)=-\frac{127}{128}\zt(8)$ together with
\begin{align*}
\zt(\bar6,1,1)&=\frac{917}{768}\zt(8)-\frac12\zt(3)\zt(5)-\frac14\zt(2)\zt(3)^2
+\frac52\zt(\bar7,1)\\
\zt(\bar4,\{1\}_4)&=-\frac{343}{192}\zt(8)+\frac12\zt(3)\zt(5)
+\frac12\zt(2)\zt(3)^2-\frac52\zt(\bar7,1)+\frac32\zt(\bar5,1,1,1)\\
\zt(\bar2,\{1\}_6)&=\frac{449}{1536}\zt(8)-\frac18\zt(2)\zt(3)^2
+\frac12(\bar7,1)-\frac14\zt(\bar5,1,1,1)+\frac12\zt(\bar3,\{1\}_5),
\end{align*}
it follows that $I_8=\frac18\bigg[-\frac{2533}{192}\zt(8)+\frac32\zt(3)\zt(5)
+\frac54\zt(2)\zt(3)^2\bigg]$.
\par
Finally, $I_9$ is
\begin{multline*}
\frac1{2903040}\int_0^\infty\bigg(-\frac{35u^{10}e^{-u}(1-e^{-u})}{(1+e^{-u})^3}
-\frac{70u^9e^{-u}(1-e^{-u})\log(1+e^{-u})}{(1+e^{-u})^3}\\
-\frac{210u^9e^{-2u}}{(1+e^{-u})^3}
+\frac{756u^8e^{-2u}}{(1+e^{-u})^2}
+\frac{1260u^7e^{-2u}\log(1+e^{-u})}{(1+e^{-u})^2}
+\frac{1260u^6e^{-2u}\log(1+e^{-u})^2}{(1+e^{-u})^2}\\
+\frac{378u^9e^{-u}}{(1+e^{-u})^2}
+\frac{756u^8e^{-u}\log(1+e^{-u})}{(1+e^{-u})^2}
+\frac{630u^7e^{-u}\log(1+e^{-u})^2}{(1+e^{-u})^2}\\
+\frac{420u^6e^{-u}\log(1+e^{-u})^3}{(1+e^{-u})^2}
-\frac{810u^8e^{-u}}{1+e^{-u}}
-\frac{1620u^7e^{-u}\log(1+e^{-u})}{1+e^{-u}}\\
-\frac{2268u^6e^{-u}\log(1+e^{-u})^2}{1+e^{-u}}
-\frac{1512u^5e^{-u}\log(1+e^{-u})^3}{1+e^{-u}}
-\frac{1260u^4e^{-u}\log(1+e^{-u})^4}{1+e^{-u}}\\
-\frac{504u^3e^{-u}\log(1+e^{-u})^5}{1+e^{-u}}
+72u\log(1+e^{-u})^7+18\log(1+e^{-u})^8\bigg)du\\
=\frac18\bigg[62\zt(\bar 9)-\frac{17}2\zt(\bar 8,1)
-\frac{17}2\zt(\bar 7,1,1)+2\zt(\bar 6,1,1,1)+2\zt(\bar 5,\{1\}_4)\\
-\zt(\bar 4,\{1\}_5)-\zt(\bar 3,\{1\}_6)+2\zt(\bar 2,\{1\}_7)
\bigg] .
\end{multline*}
Together with $\zt(\bar9)=-\frac{255}{256}\zt(9)$ and
\begin{align*}
\zt(\bar8,1)&=-\frac{1529}{512}\zt(9)+\frac{31}{32}\zt(3)\zt(6)+\frac78
\zt(4)\zt(5)+\frac12\zt(2)\zt(7)\\
\zt(\bar6,1,1,1)&=\frac{24983}{3072}\zt(9)-\frac{175}{64}\zt(3)\zt(6)
-\frac{41}{16}\zt(4)\zt(6)-\frac54\zt(2)\zt(7)+\frac1{12}\zt(3)^3\\
&+\frac52\zt(\bar 7,1,1)\\
\zt(\bar4,\{1\}_5)&=-\frac{14273}{1536}\zt(9)+\frac{49}{16}\zt(3)\zt(6)
+3\zt(4)\zt(5)+\frac32\zt(2)\zt(7)-\frac16\zt(3)^3\\
&-\frac52\zt(\bar7,1,1)+\frac32\zt(\bar5,\{1\}_4)\\
\zt(\bar2,\{1\}_7)&=\frac{12749}{6144}\zt(9)-\frac{83}{128}\zt(3)\zt(6)
-\frac{21}{32}\zt(4)\zt(5)-\frac38\zt(2)\zt(7)+\frac1{24}\zt(3)^3\\
&+\frac12\zt(\bar7,1,1)-\frac14\zt(\bar5,\{1\}_4)+\frac12\zt(\bar3,\{1\}_6)
\end{align*}
this gives
\[
I_9=\frac18\left[-\frac{20}3\zt(9)-\frac{289}{16}\zt(3)\zt(6)-\frac{135}8
\zt(4)\zt(5)-9\zt(2)\zt(7)+\frac5{12}\zt(3)^3\right] .
\]
We make two conjectures.
\begin{conj}
There is a sequence $a_0,a_1,a_2,\dots$ of rational numbers 
that begins $-2,1,-2,\frac{17}2,-62,\dots$ such that,
for $n\ge 2$,
\[
I_n=\frac18\sum_{j=2}^n(-1)^na_{\lfl\frac{j-1}2\rfl}
\zt(\bar j,\{1\}_{n-j}) .
\]
\end{conj}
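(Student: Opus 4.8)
The plan is to convert Louchard's term-by-term expansion into a single closed integral representation, integrate it once and for all against a two-parameter master integral, and then reduce the conjecture to a purely combinatorial statement about the resulting coefficients.

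\textbf{Step 1 (a clean integral representation).} Writing $\phi=\log(1+e^{-u})$ and $h=n\log\frac{1-u/(2n)}{1+u/(2n)}+u=-\frac{u^3}{12n^2}-\frac{u^5}{80n^4}-\cdots$, Louchard's manipulations amount to the identity
\[
I(n)=\frac34+\int_0^\infty\left(\frac1{4n}+\frac{u}{8n^2}\right)\left[(1+e^{-u})^{1/n}e^{R/n}-1\right]du,\qquad R=\log\left(1+\frac{e^{-u}(e^h-1)}{1+e^{-u}}\right).
\]
Here $(1+e^{-u})^{1/n}=\sum_{q\ge0}\phi^q/(q!\,n^q)$ is the generating function of the powers of $\phi$, while $e^{R/n}$ is \emph{log-free}: $R$ is a power series in $e^{-u}$ with polynomial-in-$u$ coefficients and no factor of $\phi$. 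First I would justify replacing $\int_0^{2n}$ by $\int_0^\infty$ and reading off $I_m$ as the coefficient of $n^{-m}$; the tail $\int_{2n}^\infty$ is $O(e^{-2n})$ and a Watson-type dominated-convergence estimate shows the display is a genuine asymptotic expansion (the term-by-term expansion of $e^{R/n}$ is valid only for $u=o(n^{2/3})$, but the complementary range contributes only exponentially small amounts).

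\textbf{Step 2 (a master integral).} Introduce
\[
M(p,t,s)=\int_0^\infty u^p e^{-su}(1+e^{-u})^t\,du=p!\sum_{l\ge0}\binom{t}{l}\frac1{(s+l)^{p+1}} .
\]
Since $\partial_t^q(1+e^{-u})^t|_{t=-r}=\phi^q(1+e^{-u})^{-r}$, every integral in the four lemmas is the special value $\partial_t^q M(p,t,s)|_{t=-r}$; for instance $M(p,-2,1)=-p!\,\zt(\bar p)$ recovers Lemma~\ref{two}(1). The key step is a \emph{master lemma}: for integers $p,q,s,r\ge0$, the value $\partial_t^q M(p,t,s)|_{t=-r}$ is a rational combination of height-one alternating multiple zeta values $\zt(\bar a,\{1\}_b)$ with $a\ge2$, and its front entries $a$ are all at least $p+2-r$. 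The proof follows the template of Lemma~\ref{one}: expand $(1+e^{-u})^t$ and $\phi^q$ in series, integrate term by term, and apply the partial-fraction identity \cite[Lemma 4.3]{H} to convert symmetric sums into nested ones. Since $(1+e^{-u})^{1/n}e^{R/n}$ expands into terms $u^pe^{-su}(1+e^{-u})^{t}$ with $t\in\frac1n-\mathbf{Z}_{\ge0}$, integrating the Step~1 representation with the master lemma already shows that $8(-1)^nI_n$ is a rational combination of $\{\zt(\bar j,\{1\}_{n-j}):2\le j\le n\}$, so the conjecture has the right shape; what remains is to compute the coefficient $c_{j,n}$ of each $\zt(\bar j,\{1\}_{n-j})$ and prove $c_{j,n}=a_{\lfl\frac{j-1}2\rfl}$.

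\textbf{Step 3 (isolating the small front entries).} Because $e^h-1=O(u^3)$, the part of $R$ (hence of $e^{R/n}-1$) carrying a denominator $(1+e^{-u})^{-i}$ always carries a factor $u^p$ with $p\ge 3i$; by the front-entry bound of Step~2 such a term produces only values with front entry $\ge p+2-i\ge 2i+2\ge4$ when $i\ge1$. Thus the front entries $j=2,3$ come \emph{only} from the log-only part $(1+e^{-u})^{1/n}$ (the case $R=0$), which in turn produces exactly these two front entries. Evaluating that part directly, the prefactor $\frac1{4n}$ contributes $\frac{(-1)^{m-1}}4\zt(\bar2,\{1\}_{m-2})$ and $\frac{u}{8n^2}$ contributes $\frac{(-1)^m}8\zt(\bar3,\{1\}_{m-3})$ to $I_m$, giving $c_{2,n}=-2$ and $c_{3,n}=1$ for all $n$. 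This proves $a_0=-2$ and $a_1=1$ together with their independence of $n$.

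\textbf{Step 4 (the crux: universal coefficients and the pairing).} It remains to show $c_{j,n}$ depends only on $j$ and that $c_{2k+1,n}=c_{2k+2,n}$. I would grade $e^{R/n}-1$ by the denominator power $i$: a fixed front entry $j$ receives contributions only from the finitely many gradings with $2i+2\le j$, so after the $\phi$-powers are resummed by $(1+e^{-u})^{1/n}$ the coefficient $c_{j,n}$ becomes an explicit finite sum of values $\partial_t^qM(p,t,s)|_{t=-i}$. Independence of $n$ (equivalently, independence of the number $b=n-j$ of trailing ones for fixed front entry) should follow from a shift relation: appending a trailing $1$ corresponds to one extra factor $\phi/n$, which through the $t$-derivatives of $M$ raises $b$ by one while preserving the front coefficient. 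The pairing $c_{2k+1,n}=c_{2k+2,n}$ I would trace to the parity of $h$ (odd in $u$, even in $1/n$), which is exactly what couples the two consecutive front entries $a,a+1$ emitted by each correction term into the combinations $\zt(\bar a)\pm\zt(\overline{a+1})$ already visible in Lemmas~\ref{three} and \ref{four}, forcing equal totals on an odd front entry and its successor. I expect \textbf{this step to be the main obstacle}: even granting the master lemma, proving that the large sum over all $(p,q,s,i)$ feeding a fixed $\zt(\bar j,\{1\}_b)$ collapses to a single rational number independent of both $b$ and $n$ — and in particular the odd/even pairing — appears to require a genuinely new generating-function identity for the Taylor coefficients of $M(p,t,s)$ at negative integers, combined with the precise coefficients $\frac1{(2k+1)(2n)^{2k}}$ of $h$. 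A closed form for the $a_k$ should fall out of such an identity, although the conjecture as stated does not require it.
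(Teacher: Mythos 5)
This statement is a \emph{conjecture}: the paper offers no proof of it, only the evidence of the explicit evaluations of $I_2,\dots,I_9$ obtained via Lemmas 1--4 and the Data Mine. Your proposal is therefore not being measured against an existing argument, and it should be judged on its own terms — and on those terms it is a strategy outline, not a proof. Steps 1--3 are sound and actually establish more than the paper does for general $n$: the factorization $1+e^{-u}e^h=(1+e^{-u})\bigl(1+\tfrac{e^{-u}(e^h-1)}{1+e^{-u}}\bigr)$ is correct, the proposed master integral $M(p,t,s)$ genuinely subsumes all four lemmas as derivatives at negative integers, the front-entry bound $a\ge p+2-r$ checks out against every case of Lemmas 1--4, and your computation that the $R=0$ part contributes $\tfrac{(-1)^{m-1}}4\zt(\bar2,\{1\}_{m-2})+\tfrac{(-1)^m}8\zt(\bar3,\{1\}_{m-3})$ to $I_m$ correctly yields $a_0=-2$, $a_1=1$ for all $n$, consistent with the paper's $I_2,\dots,I_9$.

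The genuine gap is the one you flag yourself: Step 4 is the entire content of the conjecture, and you leave it open. That the coefficient of $\zt(\bar j,\{1\}_{n-j})$ is independent of $n$ (equivalently of the number of trailing ones), and that it pairs as $c_{2k+1}=c_{2k+2}$, is not reduced by your Steps 1--3 to anything you know how to prove; "should follow from a shift relation" and "I would trace to the parity of $h$" are hopes, not arguments, and you concede as much. A secondary, smaller gap is that the master lemma of Step 2 is also only asserted: for general $r$ and $q$ simultaneously, showing that $\partial_t^qM(p,t,s)|_{t=-r}$ lands in the span of the height-one values $\zt(\bar a,\{1\}_b)$ (rather than requiring deeper alternating multiple zeta values) needs the partial-fraction reductions of Lemma 4 carried out in full generality, which is plausible but not automatic. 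In short: your framework is a promising and concrete road map that proves the $j=2,3$ cases of the conjectured pattern, but the conjecture itself remains unproved by your proposal, exactly as it is in the paper.
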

\begin{conj} 
$I_n$ is a rational polynomial in the ordinary zeta values
$\zt(i)$, $i>2$.
\end{conj}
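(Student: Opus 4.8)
The plan is to deduce Conjecture 2 from Conjecture 1 together with a cancellation result for the alternating multiple zeta values that occur. Assuming Conjecture 1, we have
\[
I_n=\frac{(-1)^n}{8}\sum_{j=2}^n a_{\lfl\frac{j-1}2\rfl}\zt(\bar j,\{1\}_{n-j}),
\]
a weight-$n$ rational combination of the ``one bar, remaining ones'' values $\zt(\bar r,\{1\}_s)$. The essential point is that the individual summands are \emph{not} rational polynomials in ordinary zeta values: for instance $\zt(\bar2,1,1)=-\frac1{16}\zt(4)+\frac12\zt(\bar3,1)$ involves the irreducible alternating value $\zt(\bar3,1)$. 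Thus the whole content of Conjecture 2 lies in showing that, for the \emph{particular} coefficient sequence $a_k$ of Conjecture 1, all irreducible alternating contributions cancel and only products of ordinary zeta values survive.

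To organize this cancellation I would exploit the arithmetic of the index $\lfl\frac{j-1}2\rfl$. For $k\ge1$ the coefficient $a_k$ is attached simultaneously to $j=2k+1$ and $j=2k+2$, so the pair $\zt(\overline{2k+2},\{1\}_{n-2k-2})+\zt(\overline{2k+1},\{1\}_{n-2k-1})$ enters with a common weight; by Eq.~(\ref{diff}) this pair is a rational multiple of $\int_0^\infty u^{2k}\log(1+e^{-u})^{n-2k-1}(1+e^{-u})^{-1}\,du$. The unpaired terms are $j=2$, which by Lemma~\ref{one} is a multiple of $\int_0^\infty\log(1+e^{-u})^{n-1}\,du$, and, when $n$ is odd, the top term $j=n$, equal to $\zt(\bar n)=(2^{1-n}-1)\zt(n)$ and hence already ordinary. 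Thus every integral entering $I_n$ carries an \emph{even} power of $u$, and I expect this parity to be the true source of reducibility. I stress that these integrals are not individually rational polynomials in ordinary zeta values---the pair $\zt(\bar4)+\zt(\bar3,1)$ already contains the irreducible $\zt(\bar3,1)$---so the cancellation must be global, coupling all the summands through the precise values of $a_k$.

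The tool I would use to convert this into a proof is a two-variable Aomoto--Drinfeld--Zagier-type generating function for the family $\zt(\bar r,\{1\}_s)$, analogous to the classical identity $\sum_{r,s}\zt(r+2,\{1\}_s)x^{r+1}y^{s+1}=1-\Gamma(1-x)\Gamma(1-y)/\Gamma(1-x-y)$, which proves $\zt(r+2,\{1\}_s)\in\mathbf Q[\zt(2),\zt(3),\dots]$. In the alternating case the closed form additionally involves $\log2=-\zt(\bar1)$ and the irreducible alternating values, so Conjecture 2 becomes the assertion that the linear functional ``extract $I_n$'' annihilates exactly the $\log2$ and irreducible-alternating components, the sequence $a_k$ being characterized as the one that enforces this. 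I would couple this with the parity theorem for alternating Euler sums (reductions of definite-parity sums to lower depth, as behind \cite{BBB} and \cite{Z}) and with the identity~(\ref{zhid}); the cases $n\le9$ above then serve as verification rather than proof.

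The main obstacle is establishing the cancellation uniformly in $n$. This requires, first, proving Conjecture 1 so that the exact $a_k$ are available, and second, a closed form or recursion for $a_k$ together with a uniform argument that the irreducible-alternating part of $\sum_j a_{\lfl(j-1)/2\rfl}\zt(\bar j,\{1\}_{n-j})$ vanishes. The natural framework is the motivic coaction on (alternating) multiple zeta values of Goncharov, Deligne and Brown: one would lift $I_n$ to a motivic period and show that its coaction lands in the sub-Hopf-algebra generated by the single zetas, i.e.\ that it avoids the generators attached to $\log2$ and to the irreducible alternating values. Carrying out this projection in closed form, for all $n$ simultaneously, is the crux, and it is not settled by the finite evidence assembled here.
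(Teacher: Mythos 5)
You should be aware that the statement you are addressing is Conjecture 2 of the paper: the author gives no proof, only the explicit evaluations of $I_n$ for $n\le 9$, each of which is observed (after reducing the alternating multiple zeta values via the Data Mine) to collapse to a rational polynomial in ordinary zeta values. So there is no proof in the paper to compare yours against, and your submission must stand on its own. It does not close the conjecture, and you say so yourself. Two gaps are fatal as things stand. First, the argument is conditional on Conjecture 1, which is also open, and even granting it you would still need a closed form or recursion for the $a_k$ beyond the five listed values. Second, and more seriously, the step you defer to the end --- showing that the irreducible alternating part of $\sum_j a_{\lfl\frac{j-1}2\rfl}\zt(\bar j,\{1\}_{n-j})$ vanishes for every $n$ --- is not a reduction of Conjecture 2 but a restatement of it. Naming the motivic coaction or the parity theorem for alternating Euler sums identifies a language in which the question can be posed; neither supplies the required identity, and the alternating analogue of the Drinfeld--Zagier generating function (as in \cite{BBB}) genuinely contains the irreducible constants you would need to see cancel, so no off-the-shelf evaluation finishes the job.

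That said, your structural observations are correct and consistent with everything in the paper. The index $\lfl\frac{j-1}2\rfl$ does pair $j=2k+1$ with $j=2k+2$, and by Eq.~(\ref{diff}) with $p=2k$, $q=n-2k-1$ the pair $\zt(\overline{2k+2},\{1\}_{n-2k-2})+\zt(\overline{2k+1},\{1\}_{n-2k-1})$ is a rational multiple of $\int_0^\infty u^{2k}\log(1+e^{-u})^{n-2k-1}(1+e^{-u})^{-1}\,du$; the $j=2$ term comes from Lemma~\ref{one} with $p=0$; and for odd $n$ the unpaired top term $\zt(\bar n)=(2^{1-n}-1)\zt(n)$ is already ordinary. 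You are also right that the cancellation must be global rather than termwise, since $\zt(\bar3,1)$ (which involves $\Li_4(1/2)$) is not believed to be a rational polynomial in ordinary zeta values. These remarks reorganize the conjecture in a potentially useful way and would be worth recording as such, but they do not prove it; as written this is a research plan, not a proof, and the verified cases $n\le 9$ remain the only actual evidence.
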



\begin{thebibliography}{9}
\bibitem{BBB}
J. M. Borwein, D. M. Bradley, and D. J. Broadhurst, Evaluation of
$k$-fold Euler/Zagier sums:  a compendium of results for arbitrary $k$,
\emph{Electron. J. Combin.} {\bf 4(2)} (1997), res. art. 5.
\bibitem{BBV}
J. Bl\"umlein, D. J. Broadhurst, and J. A. M. Vermaseren, The multiple
zeta value data mine, \emph{Comput. Phys. Commun.} {\bf 181} (2010),
583-625.
\bibitem{H}
M. E. Hoffman, Multiple harmonic series, \emph{Pacific J. Math.} {\bf 152}
(1992), 275-290.
\bibitem{L} 
G. Louchard, Two applications of polylog functions and Euler sums,
preprint {\tt arXiv:1709.08686v2[CO]}.
\bibitem{Z}
J. Zhao, On a conjecture of Borwein, Bradley and Broadhurst, 
\emph{J. reine angew. Math.} {\bf 639} (2010), 223-233.
\end{thebibliography}
\end{document}